\DeclareMathOperator*{\esssup}{esssup}
\newcommand{\lnorm}[1]{{\lVert#1\rVert}_\infty}
\def\qc{quasi\-conformal }
\def\md{\mathbb{D}}
\def\mc{\mathbb{C}}
\def\msc{\mathscr{C}}
\def\mcc{\mathcal{C}}
\def\mn{\mathbb{N}}
\def\T{Teich\-m\"ul\-ler }
\def\nmu{\|\mu\|_\infty}
\def\nmu{\|\mu\|_\infty}
\def\wt{\widetilde}
\def\vp{\varphi}
\def\vp{\varphi}
\def\pa{\partial}
\def\bs{B(S)}
\def\ov{\overline}
\def\de{\md}
\def\q1s{Q^1(S)}
\def\ts{T(S)}
\def\qmd{Q(\md)}
\def\tmd{T(\md)}
\def\mcs{\mathcal{S}}
\def\emu{[\mu]}
\def\emb{[\mu]_B}
\newtheorem{theorem}{Theorem}[section]
\newtheorem{lemma}{Lemma}[section]
\newtheorem{theo}{Theorem}
\newtheorem{defn}{Definition}
\begin{document}

\title{\bf{Non-decreasable extremal Beltrami differentials of non-landslide type}
\author{GUOWU YAO
\\
Department of Mathematical Sciences,
  Tsinghua University\\ Beijing,  100084,
   People's Republic of
  China \\E-mail: \texttt{gwyao@math.tsinghua.edu.cn}
}}
 \date{April 19,  2015}
\maketitle
\begin{abstract}\noindent
In this paper, we deform a uniquely-extremal Beltrami differential into different non-decreasable Beltrami differentials,   and then  construct  non-unique extremal  Beltrami differentials such that they are both non-landslide  and  non-decreasable.

\end{abstract}
\renewcommand{\thefootnote}{}

\footnote{Keywords: \T space,
\qc mapping, non-decreasable,  non-landslide, locally extremal.}

\footnote{2010 \textit{Mathematics Subject Classification.} Primary
30C75;  30C62.}
\footnote{The  work was  supported by   the National Natural Science Foundation of China (Grant
No. 11271216).}

\section{\!\!\!\!\!{. }
Introduction}\label{S:intr}

 \indent Let $S$ be a plane domain with at least two boundary points.  The \T space $\ts$ is the space of
equivalence classes of \qc maps $f$ from $S$ to a variable domain
$f(S)$. Two \qc maps $f$ from $S$ to $f(S)$ and $g$ from $S$ to
$g(S)$ are said to  be equivalent, denoted by $f\sim g$,  if there is a conformal map $c$ from $f(S)$
onto $g(S)$ and a homotopy through \qc maps $h_t$ mapping $S$ onto
$g(S)$ such that $h_0=c\circ f$, $h_1=g$ and $h_t(p)=c\circ
f(p)=g(p)$ for every $t\in [0,1]$ and every $p$ in the boundary of
$S$. Denote by $[f]$  the \T equivalence class of $f$; also
sometimes denote the equivalence class by $\emu$ where $\mu$ is the
Beltrami differential of $f$.

Denote by $Bel(S)$ the Banach space of Beltrami differentials
$\mu=\mu(z)d\bar z/dz$ on $S$ with finite $L^{\infty}$-norm and by
$M(S)$ the open unit ball in $Bel(S)$.

For $\mu\in M(S)$, define
\begin{equation*}
k_0(\emu)=\inf\{\|\nu\|_\infty:\,\nu\in\emu\}.
\end{equation*}
We say that $\mu$ is extremal  in $\emu$
  if $\nmu=k_0(\emu)$ (the corresponding \qc map $f$ is said to be extremal for its boundary values as well), uniquely extremal if $\|\nu\|_\infty>k_0(\mu)$ for any other
$\nu\in\emu$.

The cotangent space to $\ts$ at the basepoint is the Banach space
$Q(S)$ of integrable holomorphic quadratic differentials on $S$ with
$L^1-$norm
\begin{equation*}
\|\vp\|=\iint_{S}|\vp(z)|\, dxdy<\infty.
 \end{equation*}  In what follows,  let $\q1s$
denote the unit sphere of $Q(S)$.

 As is well
known, $\mu$ is extremal if and only if
  it  has a  so-called Hamilton sequence, namely, a sequence
$\{\psi_n\}\subset \q1s$, such that
\begin{equation}
\lim_{n\to\infty}Re\iint_S \mu\psi_n(z)dxdy=\nmu.
\end{equation}
By definition, a sequence $\{\psi_n\}$ is called degenerating if it
converges to 0 uniformly on compact subsets of $S$.

We would not like to give the exact definition of  Strebel point and non-Strebel point in $\ts$.  But it should be kept in mind that an  extremal  represents a non-Strebel point  if and only if it   has a degenerating Hamilton sequence (for example,  see \cite{ELi,GL}). We call an extremal representing a non-Strebel point to be a non-Strebel extremal.

\begin{defn}An extremal Beltrami differential $\mu$ in $Bel(S)$ is  said to be
of landslide type if there exists a non-empty open subset $E\subset S$ such
that
\begin{equation*}\esssup_{z\in
E}|\mu(z)|<\|\mu\|_\infty;\end{equation*} otherwise, $\mu$ is said
to be of non-landslide type.
\end{defn}
The conception of \emph{non-landslide} was firstly introduced by Li in \cite{Li5}. It was proved by Fan \cite{Fan} and the author \cite{Yao7} independently that if $\mu$ contains more than one extremal, then it contains infinitely many extremals of non-landslide type.

The following notion of \emph{locally extremal} was  introduced in \cite{Shr} by Sheretov.
\begin{defn}
 A Beltrami differential $\mu$ in $M(S)$ is called to be
locally extremal if for any domain $G\subset S$ it is extremal
in its class in $T(G)$; in other words,
\begin{equation*}
\|\mu\|_G:=\esssup_{z\in
G}|\mu|=\sup\{\frac{Re\iint_G\mu\phi(z)dxdy}{\iint_G|\phi(z)|dxdy}:\,\phi\in
Q^1(G)\}.
\end{equation*}

 \end{defn}
Obviously, the extremality for $\mu$  in $S$ is a prerequisite condition  for $\mu$ to be locally extremal. However, up to present,  it is not clear whether a \T class always contains a local extremal.

  \begin{defn}
 A Beltrami differential $\mu$ (not necessarily extremal) is called to be  non-decreasable in its class $\emu$ if  for $\nu\in \emu$,
  \begin{equation}
  |\nu(z)|\leq|\mu(z)|\  a.e. \text{\ in } S,\end{equation}
implies that $\mu=\nu$; otherwise, $\mu$  is called to be decreasable.

 \end{defn}
 The notion of non-decreasable dilatation was firstly introduced by Reich in \cite{Re4} when he studied the unique extremality of \qc mappings. The author \cite{Yao2} proved that the non-decreasable extremal in a class may be non-unique.   Shen and Chen  \cite{SC} proved that there are infinitely many non-decreasable representatives (generally, not extremal) in a class while the existence of a non-decreasable extremal is generally unknown. It should be noted that a non-unique extremal is certainly of non-constant modulus if it is non-decreasable.

 In particular, a unique extremal is
naturally   non-landslide, locally extremal and non-decreasable. However, it is not clear what about the converse. The following problem is posed in \cite{Yao7}.

\noindent\textbf{Problem $\mathscr{A}$}.
Is there a non-unique extremal $\mu$ such that $\mu$ is non-landslide, locally extremal and  non-decreasable?

Up to present, the problem seems open. We can find some examples related to the problem in literatures. The first (even essentially only) example of
 non-unique extremal   which is both non-landslide and locally extremal was given by Reich  in \cite{Re3},  but the extremal is  decreasable for it has a constant modulus.  To get a non-unique extremal of non-constant modulus that is both non-landslide and locally extremal, one may apply the Construction Theorem in \cite{Yao3} in a refined manner.
 The second example  was given by the author in Theorem 1 (2) of \cite{Yao2} that  provides a non-unique extremal which is both  locally extremal and  non-decreasable but landslide.

 One might expect the third example for a non-unique extremal which is both  non-landslide and  non-decreasable.  However, no such an extremal can be found in literatures. The motivation of this paper is to construct such an example.

 This paper is organized as follows. In Section \ref{S:prepar}, we introduce the Main Inequality and give its application.  The Infinitesimal Main Inequality is introduced in Section \ref{S:infprepar}.  We construct  extremals  which are both  non-landslide and  non-decreasable in Section  \ref{S:appl}.
The Construction Theorem for the desired extremals is obtained  in the last Section \ref{S:construction}.

\section{\!\!\!\!\!{. }
 Main Inequality and its application}\label{S:prepar}

For brevity, we restrict our consideration to $S=\md$, where  $\md=\{z\in \mc:\;|z|<1\}$ is the unit disk. The universal \T space $\tmd$ can be viewed as the set of  the equivalence classes $[f]$ of \qc mappings $f$ from $\de$ onto itself. So, for any \qc mapping from
$\de$ onto itself, there is no difference between $\tmd$ and $T(f(\md))$.

The Reich-Strebel inequality, so-called Main Inequality (see \cite{Gar, RS1, RS3}), plays an important role in the study of \T theory.
To introduce the inequality, we need some denotations. Suppose that $f$ and $g$ are two \qc mappings of $\de$ onto itself with the Beltrami differentials $\mu,\;\nu$ respectively.  Let $F=f^{-1}, \; G=g^{-1}$ and $\wt \mu,\;\wt \nu$ denote the Beltrami differentials of $F,\;G$ respectively. Put
$\alpha=\wt \mu\circ f, \;\beta=\wt\nu\circ f$. Then we have\\
\textbf{Main Inequality.} If $\mu\sim \nu$, i.e., $f$ and $g$ are equivalent, then for any $\vp\in \qmd$,
\begin{equation}\label{Eq:main}
\iint_\md\vp\;dxdy \leq \iint_\md
|\vp(z)|\frac{\left|1-\mu(z)\frac{\vp(z)}{|\vp(z)|}\right|^2}{1-|\mu(z)|^2}\frac{\left|1+ \beta\frac{\mu}{\alpha} \frac{1-\ov\mu\frac{\ov\vp}{|\vp|}}{ 1-\mu\frac{\vp}{|\vp|}}   \right|^2}{1-|\beta|^2}    \;dxdy,
\end{equation}
or equivalently (see \cite{Re3, Re4}),
\begin{equation}\label{Eq:maineq}
Re\iint_\md\frac{(\beta-\alpha)(1-\alpha\ov \beta)\tau} {(1-|\alpha|^2)(1-|\beta|^2)}\vp \;dxdy
\leq\iint_\md \frac{|\alpha-\beta|^2}{(1-|\alpha|^2)(1-|\beta|^2)}|\vp|\;dxdy,
\end{equation}
where $\tau=\frac{\ov{\pa_z f}}{\pa_z f}=-\frac{\mu}{\alpha}$.

Let
\begin{align*}
sgn z=\begin{cases}\frac{z}{|z|},\;&z\neq 0,\\
0,\;&z=0\end{cases}
\end{align*}
be the signal function of $z\in \mc$.
\begin{lemma}\label{Th:decr}
With the same notations as above, if $\mu\sim \nu$ and $|\wt \nu(w)|\leq |\wt \mu(w)|$ for almost every $w\in \md$, then there is a constant $C$ depending only on $k=\lnorm{\mu}$, such that for any $\vp\in \qmd$,
\begin{equation}\label{Eq:decr1}
\iint_\Lambda|\alpha-\beta|^2|\vp | \;dxdy
\leq C\iint_\Lambda [|\vp|-Re(\vp sgn \mu)]\;dxdy,
\end{equation}
where $\Lambda=\{z\in \md:\; \mu(z)\neq 0\}$.
\end{lemma}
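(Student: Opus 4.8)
The plan is to feed the equivalent form \eqref{Eq:maineq} of the Main Inequality into a single pointwise estimate that encodes the hypothesis $|\wt\nu|\le|\wt\mu|$. First I would record the reductions. Since $|\tau|=1$ and $\tau=-\mu/\alpha$, one has $|\alpha|=|\mu|$ a.e.; as $\alpha=\wt\mu\circ f$ and $\beta=\wt\nu\circ f$, the hypothesis becomes $|\beta|\le|\alpha|=|\mu|$ a.e., so $\alpha=\beta=0$ off $\Lambda$ and \emph{both} integrands of \eqref{Eq:maineq} are supported on $\Lambda$ (this is why I use \eqref{Eq:maineq} and not \eqref{Eq:main}, whose integrand does not vanish where $\mu=0$). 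Write $\delta=\alpha-\beta$, $s=\alpha/|\alpha|$, $\zeta=\varphi/|\varphi|$, $\xi=-\tau\zeta$ and $D=(1-|\alpha|^2)(1-|\beta|^2)$, so that $(1-k^2)^2\le D\le 1$, $sgn\,\mu=-s\tau$, $Re(\alpha\xi)=|\alpha|\,Re(s\xi)$ and $d:=1-Re(\zeta\,sgn\,\mu)=1-Re(s\xi)\ge 0$, whence the right-hand integrand of the lemma is $|\varphi|\,d$. Using $\tau\zeta=-\xi$ and $\beta=\alpha-\delta$ one computes $Re\big[(\beta-\alpha)(1-\alpha\ov\beta)\tau\zeta\big]=(1-|\alpha|^2)Re(\delta\xi)+|\delta|^2Re(\alpha\xi)$, so \eqref{Eq:maineq} takes the form
\[
\iint_\Lambda\frac{(1-|\alpha|^2)\,Re(\delta\xi)}{D}\,|\varphi|\,dxdy\ \le\ \iint_\Lambda\frac{\big(1-Re(\alpha\xi)\big)\,|\delta|^2}{D}\,|\varphi|\,dxdy,\qquad(\star)
\]
where $1-Re(\alpha\xi)=(1-|\alpha|)+|\alpha|\,d$.

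The heart of the argument is the pointwise inequality, valid a.e. on $\Lambda$ with constants $c_0>0$ and $C_*$ depending only on $k$,
\[
(1-|\alpha|^2)\,Re(\delta\xi)\ \ge\ \big(1-Re(\alpha\xi)\big)|\delta|^2+c_0|\delta|^2-C_*\,d.\qquad(\star\star)
\]
To prove $(\star\star)$ I would first treat the aligned case $\xi=\bar s$ (i.e. $d=0$). Writing $Re(\delta\bar s)=|\alpha|-|\beta|c$ with $c=\cos(\arg\beta-\arg\alpha)$, the identity $(|\alpha|-|\beta|c)(|\alpha|+|\beta|)-|\delta|^2=|\beta|(1+c)(|\alpha|-|\beta|)\ge 0$ gives $Re(\delta\bar s)\ge|\delta|^2/(|\alpha|+|\beta|)$, hence $(1-|\alpha|^2)Re(\delta\bar s)\ge\frac{1-|\alpha|^2}{|\alpha|+|\beta|}|\delta|^2$. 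Because $|\beta|\le|\alpha|\le k<1$, the coefficient exceeds $(1-|\alpha|)$ by at least $(1-|\alpha|)\frac{1-|\beta|}{|\alpha|+|\beta|}\ge\frac{(1-k)^2}{2k}$, and this strictly positive gap — produced \emph{exactly} by $|\beta|\le|\alpha|$ — is the source of $c_0$. For general $\xi$ I would use $|\xi-\bar s|=\sqrt{2d}$ to write $Re(\delta\xi)\ge Re(\delta\bar s)-|\delta|\sqrt{2d}$ and absorb the deviation by the elementary split $|\delta|\sqrt{2d}\le c_0|\delta|^2+\tfrac{1}{2c_0}d$; the remaining bounded term $|\alpha||\delta|^2 d\le 4k^3 d$ is swallowed into $C_*\,d$, yielding $(\star\star)$.

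Integrating $(\star\star)$ against the nonnegative weight $|\varphi|/D$ and combining with $(\star)$ cancels the common term $\big(1-Re(\alpha\xi)\big)|\delta|^2$, leaving
\[
c_0\iint_\Lambda\frac{|\delta|^2}{D}\,|\varphi|\,dxdy\ \le\ C_*\iint_\Lambda\frac{d}{D}\,|\varphi|\,dxdy.
\]
Finally the two-sided bound $1\le 1/D\le(1-k^2)^{-2}$ converts this into $\iint_\Lambda|\delta|^2|\varphi|\le\frac{C_*}{c_0(1-k^2)^2}\iint_\Lambda d\,|\varphi|$, which is precisely \eqref{Eq:decr1} with $C=\frac{C_*}{c_0(1-k^2)^2}$, a constant depending only on $k$.

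The crux, and the step I expect to demand the most care, is $(\star\star)$: one must check that the coefficient of $|\delta|^2$ delivered by the cross term $Re(\delta\xi)$ strictly dominates the coefficient $1-Re(\alpha\xi)$ on the larger side of \eqref{Eq:maineq}, uniformly in $|\alpha|$, $|\beta|$ and the phase $\arg\beta-\arg\alpha$, while the deviation of $\xi$ from the aligned direction $\bar s$ costs only $O(d)$. This is exactly where non-decreasability is indispensable: were $|\beta|$ allowed to exceed $|\alpha|$, the gap $(1-|\alpha|)\frac{1-|\beta|}{|\alpha|+|\beta|}$ could vanish or change sign and the estimate would collapse.
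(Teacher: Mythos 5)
Your proof is correct, and it draws its coercivity from the same two sources as the paper's --- the equivalent form (\ref{Eq:maineq}) of the Main Inequality and the pointwise hypothesis $|\beta|\le|\alpha|$ --- but the route is genuinely different in its middle portion. The paper adds $Re\iint_\Lambda\frac{(\alpha-\beta)(1-\alpha\ov \beta)}{(1-|\alpha|^2)(1-|\beta|^2)}\frac{|\alpha|}{\alpha}|\vp|\,dxdy$ to both sides to manufacture the factor $|\vp|-\vp\,sgn\,\mu$, deforms the left side into (\ref{Eq:decr4}), discards the nonnegative term $(1+|\alpha|)(|\alpha|^2-|\beta|^2)$ (the sole place $|\beta|\le|\alpha|$ enters), and is then left with a right-hand side only \emph{linear} in $|\alpha-\beta|$, so it must invoke the identity (\ref{Eq:identity}) and Schwarz's inequality on the integrals to close the loop. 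You instead expand $\delta(1-\alpha\ov\beta)\xi=(1-|\alpha|^2)\delta\xi+|\delta|^2\alpha\xi$ to cast (\ref{Eq:maineq}) as $(\star)$ and then prove a single pointwise estimate $(\star\star)$, where the deviation $|\delta|\sqrt{2d}$ is absorbed by Young's inequality into the positive gap $c_0|\delta|^2$; your identity $(|\alpha|-|\beta|c)(|\alpha|+|\beta|)-|\delta|^2=|\beta|(1+c)(|\alpha|-|\beta|)\ge 0$ plays exactly the role of the paper's discarded term, and $|\xi-\ov s|^2=2d$ is the identity (\ref{Eq:identity}) applied to the unimodular $w=s\xi$, so the same ingredients reappear at the pointwise rather than the integral level. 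What your version buys is the elimination of the Cauchy--Schwarz step and a transparent, pointwise tracking of the constant; what it costs is the heavier bookkeeping inside $(\star\star)$. I checked the individual claims: $(\star)$ follows from $1-\alpha\ov\beta=1-|\alpha|^2+\alpha\ov\delta$; the aligned-case gap $(1-|\alpha|)\frac{1-|\beta|}{|\alpha|+|\beta|}\ge\frac{(1-k)^2}{2k}$ is correct; and the leftover $|\alpha|\,d\,|\delta|^2\le 4k^3 d$ coming from $1-Re(\alpha\xi)=(1-|\alpha|)+|\alpha|d$ is legitimately pushed into $C_*d$. (As in the paper, you tacitly assume $k>0$; if $k=0$ then $\Lambda$ is null and there is nothing to prove.)
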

\begin{proof}
 Since $\alpha(z)=\wt\mu(f(z))=-\frac{\mu(z)}{\tau}$, we have $\alpha(z)=0$ when  $z\in \md\backslash \Lambda$. By the condition
$|\wt \nu(w)|\leq |\wt \mu(w)|$ ($w=f(z)\in \md$), it forces $\beta(z)=0$ when  $z\in \md\backslash \Lambda$. It follows directly from (\ref{Eq:maineq})  that
\begin{equation}\label{Eq:decr2}
\begin{split}
&-\iint_\Lambda \frac{|\alpha-\beta|^2}{(1-|\alpha|^2)(1-|\beta|^2)}|\vp|\;dxdy\leq Re\iint_\Lambda\frac{(\beta-\alpha)(1-\alpha\ov \beta)} {(1-|\alpha|^2)(1-|\beta|^2)}\frac{\mu}{\alpha}\vp \;dxdy\\
&=-Re\iint_\Lambda\frac{(\alpha-\beta)(1-\alpha\ov \beta)} {(1-|\alpha|^2)(1-|\beta|^2)}\frac{|\alpha|}{\alpha}\vp sgn \mu \;dxdy.
\end{split}
\end{equation}
In order to group $|\vp|-\vp sgn \mu$ together, we add
\[Re\iint_\Lambda\frac{(\alpha-\beta)(1-\alpha\ov \beta)} {(1-|\alpha|^2)(1-|\beta|^2)}\frac{|\alpha|}{\alpha}|\vp| \;dxdy\]
to both sides of (\ref{Eq:decr2}) and get
\begin{equation}\label{Eq:decr3}
\begin{split}
&Re\iint_\Lambda\frac{(\alpha-\beta)(1-\alpha\ov \beta)} {(1-|\alpha|^2)(1-|\beta|^2)}\frac{|\alpha|}{\alpha}|\vp| \;dxdy-\iint_\Lambda \frac{|\alpha-\beta|^2}{(1-|\alpha|^2)(1-|\beta|^2)}|\vp|\;dxdy\\
&\leq Re\iint_\Lambda\frac{(\alpha-\beta)(1-\alpha\ov \beta)} {(1-|\alpha|^2)(1-|\beta|^2)}\frac{|\alpha|}{\alpha}(|\vp|-\vp sgn \mu) \;dxdy.
\end{split}
\end{equation}
By a deformation, we have
\begin{equation}\label{Eq:decr4}
\begin{split}
&\iint_\Lambda\frac {(1-|\alpha|)|\alpha-\beta|^2+(1+|\alpha|)(|\alpha|^2-|\beta|^2)}{2|\alpha|(1+|\alpha|)(1-|\beta|^2)}|\vp| \;dxdy\\
&\leq Re\iint_\Lambda\frac{(\alpha-\beta)(1-\alpha\ov \beta)} {(1-|\alpha|^2)(1-|\beta|^2)}\frac{|\alpha|}{\alpha}(|\vp|-\vp sgn \mu) \;dxdy.
\end{split}
\end{equation}
Then,
\begin{equation}\label{Eq:decrea}
\begin{split}
&\iint_\Lambda\frac {(1-|\alpha|)|\alpha-\beta|^2}{2|\alpha|(1+|\alpha|)(1-|\beta|^2)}|\vp| \;dxdy\\
&\leq Re\iint_\Lambda\frac{(\alpha-\beta)(1-\alpha\ov \beta)} {(1-|\alpha|^2)(1-|\beta|^2)}\frac{|\alpha|}{\alpha}(|\vp|-\vp sgn \mu) \;dxdy.
\end{split}
\end{equation}
Since $|\beta(z)|\leq|\alpha(z)|\leq k$, one finds that a lower bound on the coefficient of $|\vp|$ on the left of (\ref{Eq:decr4}) is
\begin{align*}
&\frac {(1-|\alpha|)|\alpha-\beta|^2}{2|\alpha|(1+|\alpha|)(1-|\beta|^2)}\geq \frac{1-k}{2k(1+k)}|\alpha-\beta|^2.
\end{align*}
 An upper bound for the integrand on the right of (\ref{Eq:decr4}) is
 \begin{align*}
&|\alpha-\beta|\frac{1+|\alpha|^2}{(1-|\alpha|^2)(1-|\beta|^2)}||\vp|-\vp sgn \mu|\leq \frac{1+k^2}{(1-k^2)^2}|\alpha-\beta|\cdot||\vp|-\vp sgn \mu|.
\end{align*}
Therefore, by the  the identity
\begin{equation}\label{Eq:identity}||w|-w|^2=2|w|(|w|-Re w),\end{equation}
we have
\begin{equation}\label{Eq:decr5}
\begin{split}
\iint_\Lambda |\alpha-\beta|^2|\vp|\;dxdy&\leq \frac{2k(1+k^2)}{(1+k)(1-k)^3}\iint_\Lambda |\alpha-\beta|\cdot||\vp|-\vp sgn \mu|\;dxdy\\
&=C'\iint_\Lambda |\alpha-\beta||\vp|^{\frac{1}{2}}[|\vp|-Re(\vp sgn \mu)]^{\frac{1}{2}}\;dxdy,
\end{split}
\end{equation}
where $C'=C'(k)=\frac{2\sqrt{2}k(1+k^2)}{(1+k)(1-k)^3}.$

Applying Schwarz's Inequality, we get
 \begin{align*}
&\iint_\Lambda |\alpha-\beta|^2|\vp|\;dxdy\leq C'^2\iint_\Lambda  [|\vp|-Re(\vp sgn \mu)]\;dxdy.
\end{align*}

\end{proof}

\section{\!\!\!\!\!{. }
Infinitesimal Main Inequality}\label{S:infprepar}

Two Beltrami differentials $\mu$ and $\nu$ in $Bel(S)$ are said to
be infinitesimally equivalent, denoted by $\mu\approx\nu$,  if
\begin{equation*}\int_S\mu\vp =\int_S \nu\vp,  \text{
for any } \vp\in Q(S).
\end{equation*}
The tangent space $\bs$ of $\ts$ at the basepoint is defined as the
set of the quotient space of $Bel(S)$ under the equivalence
relation.  Denote by $\emb$ the equivalence class of $\mu$ in
$\bs$.  The  set of all  Beltrami differentials equivalent to zero is
called the $\mathscr{N}-$class in $Bel(S)$.

 We say that $\mu$ is (infinitesimally) extremal  (in $\emb$) if $\nmu=\|\emb\|$,
(infinitesimally) uniquely extremal if $\|\nu\|_\infty>\nmu$ for any other $\nu\in
\emb$.

The notion of infinitesimal Strebel point and non-Strebel point can be found in \cite{ELi}. Any extremal in an infinitesimal non-Strebel point is called an infinitesimal non-Strebel extremal.

\begin{defn}
 A Beltrami differential $\mu$ (not necessarily extremal) is called to be infinitesimally non-decreasable in its class $\emb$ if  for $\nu\in \emb$,
  \begin{equation}
  |\nu(z)|\leq|\mu(z)|\  a.e. \text{\ in } S,\end{equation}
implies that $\mu=\nu$; otherwise, $\mu$  is called to be infinitesimally decreasable.
 \end{defn}

 The following is the Infinitesimal Main Inequality on $\md$, whose proof can be found in \cite{BLMM,Re3}. \\
 \textbf{Infinitesimal Main Inequality.} Suppose $\mu,\;\nu\in M(\md)$. If $\mu\approx \nu$, i.e., $\mu$ and $\nu$ are infinitesimally equivalent, then for any $\vp\in \qmd$,
\begin{equation}\label{Eq:infmaineq}
Re\iint_\md \frac{(\mu-\nu)(1-\mu\ov\nu)}{1-|\nu|^2}\vp\;dxdy
\leq\iint_\md \frac{|\mu-\nu|^2|\nu|}{1-|\nu|^2}|\vp|\;dxdy.
\end{equation}

 \begin{lemma}\label{Th:infdecr}
Let $\mu,\;\nu\in \bs$. If $\mu\approx \nu$ and $| \nu(z)|\leq |\mu(z)|$ for almost every $z\in \md$, then there is a constant $C$ depending only on $k=\lnorm{\mu}$, such that for any $\vp\in \qmd$,
\begin{equation}\label{Eq:infdecr1}
\iint_\Lambda|\alpha-\beta|^2|\vp | \;dxdy
\leq C\iint_\Lambda [|\vp|-Re(\vp sgn \mu)]\;dxdy,
\end{equation}
where $\Lambda=\{z\in \md:\; \mu(z)\neq 0\}$.
\end{lemma}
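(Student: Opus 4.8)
The plan is to run the proof of Lemma \ref{Th:decr} essentially verbatim, but starting from the Infinitesimal Main Inequality (\ref{Eq:infmaineq}) instead of (\ref{Eq:maineq}); in the infinitesimal setting the pair $\alpha,\beta$ is replaced by $\mu,\nu$ themselves, so the left-hand integrand in (\ref{Eq:infdecr1}) is to be read as $|\mu-\nu|^2|\vp|$. First I would note that $|\nu(z)|\le|\mu(z)|$ a.e.\ forces $\nu(z)=0$ on $\md\backslash\Lambda$, since $\mu$ vanishes there; hence both integrands in (\ref{Eq:infmaineq}) are supported on $\Lambda$, and the whole argument may be carried out over $\Lambda$ with the a priori bound $|\nu|\le|\mu|\le k$.

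Next I would extract $sgn\,\mu$ from the left-hand side of (\ref{Eq:infmaineq}). Writing $\mu=|\mu|\,sgn\,\mu$ gives $\mu-\nu=sgn\,\mu\,(|\mu|-\nu\,\overline{sgn\,\mu})$, so that
\[
\frac{(\mu-\nu)(1-\mu\bar\nu)}{1-|\nu|^2}\,\vp
=K'\cdot \vp\, sgn\,\mu,\qquad
K':=\frac{(|\mu|-\nu\,\overline{sgn\,\mu})(1-\mu\bar\nu)}{1-|\nu|^2}.
\]
Then, mimicking the passage from (\ref{Eq:decr2}) to (\ref{Eq:decr3}), I would rearrange (\ref{Eq:infmaineq}) and add $Re\iint_\Lambda K'|\vp|\,dxdy$ to both sides, which regroups the two $\vp$-integrals into $|\vp|-\vp\, sgn\,\mu$ on the right and leaves $\iint_\Lambda\big(Re\,K'-\frac{|\mu-\nu|^2|\nu|}{1-|\nu|^2}\big)|\vp|\,dxdy$ on the left.

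The hard part will be the analogue of the ``deformation'' (\ref{Eq:decr4}): I must show that the coefficient
\[
Q:=Re\,K'-\frac{|\mu-\nu|^2|\nu|}{1-|\nu|^2}
\]
is bounded below by $c(k)\,|\mu-\nu|^2$ for some positive constant depending only on $k=\lnorm{\mu}$. In Lemma \ref{Th:decr} the corresponding quantity split off a manifestly nonnegative term $(1+|\alpha|)(|\alpha|^2-|\beta|^2)$; here the denominators are asymmetric (only $1-|\nu|^2$ appears, together with the extra factor $|\nu|$), so an honest algebraic identity is needed rather than a term one simply drops. Setting $a=|\mu|$, $b=|\nu|$ and $R=Re\big(\bar\mu\nu/|\mu|\big)\in[-b,b]$, one checks that $Re\,K'=\frac{a(1+b^2)-R(1+a^2)}{1-b^2}$, so both $Q$ and $|\mu-\nu|^2=a^2+b^2-2aR$ are affine in $R$; hence $Q-c|\mu-\nu|^2$ attains its minimum over $R\in[-b,b]$ at an endpoint, and it suffices to check $R=\pm b$. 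A direct computation gives $(1-b^2)\,Q|_{R=b}=(a-b)(1-2ab+b^2)$ and $(1-b^2)\,Q|_{R=-b}=(a+b)(1-b^2)$; comparing these with $(a\mp b)^2=|\mu-\nu|^2|_{R=\pm b}$ and using $b\le a\le k$ (so $1-2ab+b^2\ge1-k^2$ and $a+b\le 2k$) yields $Q\ge c(k)\,|\mu-\nu|^2$ with, e.g., $c(k)=\min\{(1-k^2)/k,\,1/(2k)\}$.

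Finally I would estimate $|K'|\le\frac{1+k^2}{1-k^2}|\mu-\nu|$ (from $|1-\mu\bar\nu|\le1+k^2$ and $1-|\nu|^2\ge1-k^2$), so that the chain $\iint_\Lambda Q|\vp|\le Re\iint_\Lambda K'(|\vp|-\vp\, sgn\,\mu)\le\iint_\Lambda|K'|\,\big||\vp|-\vp\, sgn\,\mu\big|$ becomes, after inserting the lower bound for $Q$, an estimate of $\iint_\Lambda|\mu-\nu|^2|\vp|$. Applying the identity (\ref{Eq:identity}) with $w=\vp\, sgn\,\mu$ (so that $|w|=|\vp|$ on $\Lambda$) gives $\big||\vp|-\vp\, sgn\,\mu\big|=\sqrt2\,|\vp|^{1/2}\big[|\vp|-Re(\vp\, sgn\,\mu)\big]^{1/2}$, and closing with Schwarz's inequality exactly as in (\ref{Eq:decr5}) produces (\ref{Eq:infdecr1}) with a constant of the form $C=2(1+k^2)^2/\big(c(k)^2(1-k^2)^2\big)$, depending only on $k$.
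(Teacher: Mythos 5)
Your argument is sound for $\lnorm{\mu}<1$ and follows the paper's own route: restrict the Infinitesimal Main Inequality to $\Lambda$, factor out $sgn\,\mu$, add $Re\iint_\Lambda K'|\vp|\,dxdy$ to regroup $|\vp|-\vp\,sgn\,\mu$, bound the resulting coefficient $Q$ below by $c(k)|\mu-\nu|^2$, and close with the identity (\ref{Eq:identity}) and Schwarz. I checked your endpoint computation: with $a=|\mu|$, $b=|\nu|$, $R=Re(\ov\mu\nu/|\mu|)$ one does get $(1-b^2)\,Q|_{R=b}=(a-b)(1-2ab+b^2)$ and $(1-b^2)\,Q|_{R=-b}=(a+b)(1-b^2)$, and $1-2ab+b^2\ge 1-k^2$ since $k^2+b^2-2ab\ge(k-b)^2\ge0$; so $c(k)=\min\{(1-k^2)/k,\,1/(2k)\}$ is a valid choice. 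The paper is slightly cleaner at this step: it first replaces $|\nu|$ by $|\mu|$ in the quadratic term (legitimate because $|\nu|\le|\mu|$), after which the coefficient equals exactly $\frac{(1-|\mu|^2)\left[|\mu-\nu|^2+(|\mu|^2-|\nu|^2)\right]}{2|\mu|(1-|\nu|^2)}$, and the lower bound $\frac{1-k^2}{2k}|\mu-\nu|^2$ drops out with no case analysis. Your affine-in-$R$ endpoint argument is a correct substitute for that identity.

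The gap: the lemma places no restriction on $k=\lnorm{\mu}$, and it is invoked later (Theorem \ref{Th:infnondecr}) for $\mu=\kappa\eta$ with $\eta\in Bel(\md)$ arbitrary, so the case $k\ge1$ must be covered. Your proof fails there twice over: the Infinitesimal Main Inequality (\ref{Eq:infmaineq}) is stated only for $\mu,\nu\in M(\md)$, and your constant $2(1+k^2)^2/\big(c(k)^2(1-k^2)^2\big)$ blows up as $k\to1^-$ and is meaningless for $k\ge1$. The paper closes this by homogeneity: for $k\ge 1$ set $\mu_1=\mu/(sk)$ and $\nu_1=\nu/(sk)$ with $s>1$; then $\mu_1\approx\nu_1$ and $|\nu_1|\le|\mu_1|\le1/s$, so the $k<1$ estimate applies to the pair $\mu_1,\nu_1$, and scaling back yields (\ref{Eq:infdecr1}) with a constant depending only on $k$ (optimizing over $s$ the paper records $C=8k^2$ for all $k\ge0$). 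You need to append this rescaling step to make the proof complete.
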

\begin{proof}
At first, let $k<1.$ Since $|\nu(z)|\leq |\mu(z)|=0$  when $z\in \md\backslash \Lambda$, it follows from  (\ref{Eq:infmaineq}) that
\begin{equation}\label{Eq:infdecr2}
-\iint_\Lambda \frac{|\mu-\nu|^2|\nu|}{1-|\nu|^2}|\vp|\;dxdy\leq Re\iint_\Lambda \frac{(\nu-\mu)(1-\mu\ov\nu)}{1-|\nu|^2}\vp\;dxdy.
\end{equation}
Hence,
\begin{equation}\label{Eq:infdecr3}
-\iint_\Lambda \frac{|\mu-\nu|^2|\mu|}{1-|\nu|^2}|\vp|\;dxdy\leq Re\iint_\Lambda \frac{(\nu-\mu)(1-\mu\ov\nu)}{1-|\nu|^2}\vp\;dxdy.
\end{equation}

In order to group $|\vp|-\vp sgn \mu$ together, we add
\[Re\iint_\md \frac{(\mu-\nu)(1-\mu\ov\nu)}{1-|\nu|^2}\frac{|\mu|}{\mu}|\vp|\;dxdy\]
to both sides of (\ref{Eq:infdecr3}) and get
\begin{equation}\label{Eq:infdecr4}
\begin{split}
&Re\iint_\md \frac{(\mu-\nu)(1-\mu\ov\nu)}{1-|\nu|^2}\frac{|\mu|}{\mu}|\vp|\;dxdy-\iint_\Lambda \frac{|\mu-\nu|^2|\nu|}{1-|\nu|^2}|\vp|\;dxdy\\
&=Re\iint_\Lambda \frac{(\nu-\mu)(1-\mu\ov\nu)}{1-|\nu|^2}\frac{|\mu|}{\mu}(|\vp|-\vp sgn \mu)\;dxdy.
\end{split}
\end{equation}
By a deformation, we have
\begin{equation}\label{Eq:infdecr5}
\begin{split}
&\iint_\Lambda\frac{(1-|\mu|^2)|\mu-\nu|^2+(1-|\mu|^2)(|\mu|^2-|\nu|^2)} {2|\mu|(1-|\nu|^2)} |\vp|\;dxdy\\
&\leq  Re\iint_\Lambda \frac{(\mu-\nu)(1-\mu\ov\nu)}{1-|\nu|^2}\frac{|\mu|}{\mu}(|\vp|-\vp sgn \mu)\;dxdy.
\end{split}
\end{equation}
Then,
\begin{equation}\label{Eq:infdecr6}
\begin{split}
&\iint_\Lambda\frac{(1-|\mu|^2)|\mu-\nu|^2} {2|\mu|(1-|\nu|^2)} |\vp|\;dxdy\\
&\leq  Re\iint_\Lambda \frac{(\mu-\nu)(1-\mu\ov\nu)}{1-|\nu|^2}\frac{|\mu|}{\mu}(|\vp|-\vp sgn \mu)\;dxdy.
\end{split}
\end{equation}
Since $|\nu(z)|\leq|\mu(z)|$, one finds that a lower bound on the coefficient of $|\mu-\nu|^2|\vp|$ on the left of (\ref{Eq:infdecr6}) is
\begin{align*}
&\frac{1-|\mu|^2}{2|\mu|(1-|\nu|^2)}\geq\frac{1-k^2}{2k}.
\end{align*}
An upper bound of the integrand on the right side of (\ref{Eq:infdecr6}) is
\[|\mu-\nu|\frac{1+|\mu|^2}{1-|\mu|^2}||\vp|-\vp sgn \mu|\leq \frac{1+k^2}{1-k^2}|\mu-\nu|\cdot||\vp|-\vp sgn \mu|.\]
Therefore,  using the identity (\ref{Eq:identity}),
we get
\begin{equation}\label{Eq:infdecr7}
\begin{split}
&\iint_\Lambda|\mu-\nu|^2 |\vp|\;dxdy\leq \frac{2k (1+k^2)}{(1-k^2)^2} \iint_\Lambda |\mu-\nu||\vp|-\vp sgn \mu|\;dxdy\\
&=\wt C\iint_\Lambda |\mu-\nu||\vp|^{\frac{1}{2}}[|\vp|-Re(\vp sgn \mu)]^{\frac{1}{2}}\;dxdy,
\end{split}
\end{equation}
where $\wt C=\wt C(k)=\frac{2\sqrt{2}k(1+k^2)}{(1-k^2)^2}.$

Applying  Schwarz's Inequality, we obtain
 \begin{align}\label{Eq:kk}
&\iint_\Lambda |\mu-\nu|^2|\vp|\;dxdy\leq \wt C^2\iint_\Lambda  [|\vp|-Re(\vp sgn \mu)]\;dxdy.
\end{align}
Now, if $k\geq 1$. Let $\mu_1=\frac{\mu}{sk},\;\nu_1=\frac{\nu}{sk}$ where $s>1$. Then $\mu_1\approx\nu_1$ and $|\nu_1|\leq |\mu_1|\leq \frac{1}{s}$ for almost all $z\in \md$. It derives from (\ref{Eq:kk}) that
\begin{align}\label{Eq:kk1}
&\iint_\Lambda |\mu-\nu|^2|\vp|\;dxdy\leq \frac{8k^2s^4(s^2+1)^2}{(s^2-1)^4}\iint_\Lambda  [|\vp|-Re(\vp sgn \mu)]\;dxdy.
\end{align}
By a refined computation, one can show that for any $k\geq0$,
\begin{equation}\label{Eq:cc}
\iint_\Lambda |\mu-\nu|^2|\vp|\;dxdy \leq 8k^2 \iint_\Lambda  [|\vp|-Re(\vp sgn \mu)]\;dxdy.
\end{equation}

\end{proof}

\section{\!\!\!\!\!{. Non-decreasable extremals of non-landslide type }}\label{S:appl}
In this section, we deform a non-Strebel unique extremal into an  extremal in a way that keeps  ``\emph{non-landslide}" and ``\emph{non-decreasable}". We need to  use the Characterization Theorem (see Theorem 1 in \cite{BLMM}) on the unique extremality. Before stating the theorem, we interpret what  the Reich's condition and Reich sequence are.

Following \cite{BLMM}, we say that $\mu\in Bel(\md)$ satisfies Reich's condition on a subset $\mcs\subset\md$ if there exists a sequence $\{\vp_n\}$  in $\qmd$ such that \\
(a) $\delta[\vp_n]:=\|\mu\|_\infty\|\vp_n\|-Re\iint_\md \mu(z)\vp_n(z)\,dxdy\to 0$, and\\
(b) $\liminf_{n\to \infty}|\vp_n(z)|>0$ for almost all $z\in \mcs$.

Generally, if $\mu$ satisfies Reich's condition above,  we call $\{\vp_n\}$ a Reich sequence for $\mu$ on $\mcs$.

The Characterization Theorem   discloses the relationship among   unique extremality infinitesimal, unique extremality and Reich's condition.
\begin{theo}\label{Th:blmm}
Let  $\mu\in M(\md)$ with a constant modulus. Then the following three conditions are equivalent:\\
(i) $\mu$ is uniquely extremal in its class in $\tmd$;\\
(ii) $\mu$ is uniquely extremal in its class in $B(\md)$;\\
(iii) $\mu$ satisfies Reich's condition on $\md$, i.e. $\mu$  has a Reich sequence on $\md$.
\end{theo}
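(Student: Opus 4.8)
The plan is to run the equivalences through condition (iii), deducing (iii) $\Rightarrow$ (i) and (iii) $\Rightarrow$ (ii) almost directly from the decreasability estimates in Lemmas \ref{Th:decr} and \ref{Th:infdecr}, and then proving the two reverse implications (i) $\Rightarrow$ (iii) and (ii) $\Rightarrow$ (iii) by contraposition. Throughout I may assume $k=\lnorm{\mu}>0$, since $k=0$ gives $\mu=0$ and all three statements hold trivially. The point that makes the constant-modulus hypothesis decisive is the elementary remark that, when $|\mu|\equiv k$, a competitor $\nu$ satisfies $\nnu\le\nmu$ if and only if $|\nu(z)|\le|\mu(z)|$ a.e.; thus for such $\mu$ the notions ``uniquely extremal'' and ``non-decreasable'' coincide, in both the \T and the infinitesimal settings, so that every extremal competitor automatically satisfies the hypotheses of the two lemmas.

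For (iii) $\Rightarrow$ (ii), let $\{\vp_n\}$ be a Reich sequence for $\mu$ and let $\nu\approx\mu$ with $|\nu|\le|\mu|$. Since $|\mu|\equiv k$ we have $sgn\,\mu=\mu/k$ and $\|\vp_n\|=1$, so the right side of (\ref{Eq:infdecr1}) equals $\iint_\md[|\vp_n|-Re(\vp_n\,sgn\,\mu)]\,dxdy=\tfrac1k\,\delta[\vp_n]\to0$ by part (a) of Reich's condition, where $\Lambda=\md$ because $k>0$. Hence $\iint_\md|\mu-\nu|^2|\vp_n|\,dxdy\to0$, and Fatou's lemma combined with part (b), $\liminf_n|\vp_n|>0$ a.e., forces $|\mu-\nu|=0$ a.e., i.e. $\mu=\nu$. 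The implication (iii) $\Rightarrow$ (i) is parallel, through Lemma \ref{Th:decr}: its hypothesis $|\wt\nu|\le|\wt\mu|$ holds because $|\alpha|=|\mu|/|\tau|=|\mu|\equiv k$ yields $|\wt\mu|\equiv k$ a.e., while $|\wt\nu|\le\lnorm{\wt\nu}=\nnu\le k$; the same estimate of the right side of (\ref{Eq:decr1}) and the same Fatou argument give $\alpha=\beta$ a.e., hence $\wt\mu=\wt\nu$. Then $F=f^{-1}$ and $G=g^{-1}$ carry identical Beltrami coefficients and agree on $\partial\md$, so $G\circ F^{-1}$ is a conformal self-map of $\md$ fixing the boundary, i.e. the identity; therefore $f=g$ and $\mu=\nu$.

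The substance of the theorem lies in the reverse direction, which I would prove by contraposition: assuming $\mu$ has no Reich sequence, I will construct a genuine competitor. As $\mu$ is extremal it admits Hamilton sequences, but by hypothesis none is non-degenerate almost everywhere; an exhaustion/diagonal argument then isolates a \emph{degeneration set} $E\subset\md$ of positive measure on which every Hamilton sequence tends to $0$. The crucial step is to produce on $E$ a nonzero element $\eta$ of the $\mathscr{N}$-class, that is $\eta\approx0$, with $|\mu+\eta|\le|\mu|$ a.e.; then $\nu=\mu+\eta$ violates the infinitesimal unique extremality of (ii). Morally, the collapse of all Hamilton sequences on $E$ says that the ``decreasing direction'' $-sgn\,\mu$ restricted to $E$ is not detected by $\qmd$, and by Hahn--Banach duality this is exactly what permits adding a nontrivial infinitesimally trivial differential supported on $E$ that lowers $|\mu|$ there.

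I expect this construction to be the main obstacle, for two reasons. First, turning ``every Hamilton sequence degenerates on $E$'' into a usable duality statement requires controlling the weak-$*$ limits of the normalized densities $|\vp_n|$ and applying a separation theorem in the correct function space; this is the analytic heart of the argument and is where \cite{BLMM} does its real work. Second, to obtain (i) $\Rightarrow$ (iii) rather than merely (ii) $\Rightarrow$ (iii) one must upgrade the infinitesimal deformation $\eta$ to an honest Teichm\"uller-equivalent decrease, integrating the trivial infinitesimal variation and checking that the resulting quasiconformal map remains equivalent to $f$ while strictly lowering the dilatation on $E$. Once both reverse implications are secured, combining them with the two lemma-based directions closes the chain (i) $\Leftrightarrow$ (iii) $\Leftrightarrow$ (ii).
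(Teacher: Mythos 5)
The first thing to say is that the paper does not prove this statement at all: it is quoted verbatim as Theorem 1 of \cite{BLMM} (the Characterization Theorem of Bo\v{z}in--Lakic--Markovi\'c--Mateljevi\'c) and used as a black box. So you are not reconstructing an argument the author gives; you are attempting to reprove an imported result. Your treatment of the two easy implications (iii) $\Rightarrow$ (ii) and (iii) $\Rightarrow$ (i) is essentially correct and is in fact the same computation the paper performs later in Theorems \ref{Th:nondecr} and \ref{Th:infnondecr} (there with a non-constant factor $\kappa$): the observation that constant modulus turns ``extremal competitor'' into ``pointwise dominated competitor,'' so that Lemmas \ref{Th:decr} and \ref{Th:infdecr} plus Fatou apply, is exactly the right mechanism, and your handling of the passage from $\wt\mu=\wt\nu$ back to $\mu=\nu$ is fine.

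The genuine gap is the reverse direction, which is the entire content of the theorem, and your sketch does not close it. Two concrete failures. First, the negation of Reich's condition does not hand you a single positive-measure ``degeneration set'' $E$ on which \emph{every} sequence with $\delta[\vp_n]\to0$ collapses: the exceptional set where $\liminf|\vp_n|=0$ depends on the sequence, and extracting a uniform $E$ (or reformulating the failure as a usable dual statement) is nontrivial. Second, and more seriously, even granted such an $E$, producing a nonzero $\eta\approx0$ with the \emph{pointwise} constraint $|\mu+\eta|\le|\mu|$ a.e.\ is not a Hahn--Banach separation argument: the annihilator of $Q(\md)$ in $L^\infty$ is described by integral (weak) conditions, whereas here $|\mu|\equiv k$ everywhere leaves no slack, so the required inequality is a genuine pointwise constraint on the perturbation. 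This is precisely where \cite{BLMM} builds its admissible-variation machinery, and ``morally\dots this is exactly what permits'' is an announcement of the difficulty, not a resolution of it. (Contrast with the paper's Construction Theorem I, where a trivial differential supported on $\msc$ can be added only because $\esssup_{\msc}|\mu|<k$ creates room; that escape is unavailable under the constant-modulus hypothesis of the present theorem.) The promotion of the infinitesimal competitor to a Teichm\"uller-equivalent one, needed for (i) $\Rightarrow$ (iii), is likewise asserted rather than proved. As it stands the proposal proves one half of the equivalence and correctly identifies, but does not supply, the other half.
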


The following theorem deforms a unique extremal Beltrami differential into a non-decreasable Beltrami differential which generally does not keep the extremality.
\begin{theorem}\label{Th:nondecr} Suppose $\eta\in M(\md)$ is uniquely extremal in $[\eta]$ and  has a constant modulus.  Let $k=\lnorm{\eta}$. Put
\begin{align*}
\mu(z)=\kappa(z)\eta(z), \;z\in \md,
\end{align*}
where $\kappa(z)$ is a non-negative measurable function on $\md$ with $\lnorm{\kappa}\leq k$.
Let $f$ be the \qc mapping from $\md$ onto itself with the Beltrami differential $\mu$. Then  $F=f^{-1}$  has a non-decreasable Beltrami differential $\wt\mu$ in its \T class $[\wt\mu]$.
\end{theorem}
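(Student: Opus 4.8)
The plan is to verify non-decreasability directly: I take an arbitrary competitor $\wt\nu$ with $\wt\nu\sim\wt\mu$ and $|\wt\nu(w)|\le|\wt\mu(w)|$ a.e. on $\md$, and aim to force $\wt\nu=\wt\mu$. First I would introduce the maps compatible with the notation of Lemma \ref{Th:decr}: let $G$ be the \qc self-map of $\md$ whose Beltrami differential is $\wt\nu$ and put $g=G^{-1}$ with Beltrami differential $\nu$. Since inversion is compatible with \T equivalence, $\wt\mu\sim\wt\nu$ is the same as $F\sim G$, i.e. $f\sim g$, hence $\mu\sim\nu$. With $\alpha=\wt\mu\circ f$ and $\beta=\wt\nu\circ f$, the hypothesis $|\wt\nu|\le|\wt\mu|$ reads $|\beta(z)|\le|\alpha(z)|$ a.e. Thus the hypotheses of Lemma \ref{Th:decr} are met (note $\lnorm{\mu}\le k^2<1$, so the constant $C$ is finite), giving, for every $\vp\in\qmd$,
\begin{equation*}
\iint_\Lambda|\alpha-\beta|^2|\vp|\,dxdy\le C\iint_\Lambda[|\vp|-Re(\vp\,sgn\,\mu)]\,dxdy,\qquad \Lambda=\{z\in\md:\mu(z)\ne0\}.
\end{equation*}

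The core of the proof will be to drive the right-hand side to zero along a cleverly chosen sequence. Since $|\eta|\equiv k>0$, the set $\Lambda$ equals $\{\kappa\ne0\}$ and $sgn\,\mu=sgn(\kappa\eta)=sgn\,\eta$ on $\Lambda$. Because $\eta$ is uniquely extremal with constant modulus, Theorem \ref{Th:blmm} supplies a Reich sequence $\{\vp_n\}\subset\qmd$ for $\eta$ on $\md$, so that $\delta[\vp_n]\to0$ and $\liminf_{n}|\vp_n(z)|>0$ a.e. Using $\eta=k\,sgn\,\eta$, I would rewrite
\begin{equation*}
\delta[\vp_n]=k\iint_\md|\vp_n|\,dxdy-k\,Re\iint_\md sgn(\eta)\vp_n\,dxdy=k\iint_\md[|\vp_n|-Re(\vp_n\,sgn\,\eta)]\,dxdy,
\end{equation*}
so that $\iint_\md[|\vp_n|-Re(\vp_n\,sgn\,\eta)]\to0$. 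The integrand is non-negative, hence its integral over the subdomain $\Lambda$ also tends to $0$; replacing $sgn\,\eta$ by $sgn\,\mu$ there and substituting $\vp=\vp_n$ into the displayed inequality yields $\iint_\Lambda|\alpha-\beta|^2|\vp_n|\,dxdy\to0$.

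To pass to the pointwise conclusion I would invoke Fatou's lemma on the non-negative integrands:
\begin{equation*}
\iint_\Lambda|\alpha-\beta|^2\big(\liminf_n|\vp_n|\big)\,dxdy\le\liminf_n\iint_\Lambda|\alpha-\beta|^2|\vp_n|\,dxdy=0.
\end{equation*}
As $\liminf_n|\vp_n|>0$ a.e. on $\Lambda$, this forces $\alpha=\beta$ a.e. on $\Lambda$. On $\md\setminus\Lambda$ one has $\mu=0$, whence $\alpha=0$, and (as noted at the start of the proof of Lemma \ref{Th:decr}) $|\beta|\le|\alpha|$ then gives $\beta=0$ there; so $\alpha=\beta$ a.e. on all of $\md$. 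Transporting back through $f$, which is absolutely continuous in both directions, converts $\wt\mu\circ f=\wt\nu\circ f$ into $\wt\mu=\wt\nu$ a.e., which is the desired non-decreasability.

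The hard part will be the middle step: recognizing that $\delta[\vp_n]$ is, up to the factor $k$, exactly the quantity $\iint_\md[|\vp_n|-Re(\vp_n\,sgn\,\eta)]$ appearing on the right of Lemma \ref{Th:decr}, and then using non-negativity of this integrand to restrict the integration from $\md$ down to the possibly small set $\Lambda$ without losing the convergence to $0$. Equally essential is the positivity property $\liminf_n|\vp_n|>0$ of the Reich sequence, which is precisely what upgrades the $L^1$-smallness of $|\alpha-\beta|^2|\vp_n|$ into the pointwise identity $\alpha=\beta$. The remaining ingredients — that inversion preserves \T equivalence and that an a.e. identity pulls back through the \qc map $f$ — are routine and I would dispatch them briefly.
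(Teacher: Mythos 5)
Your argument is correct and is essentially the paper's own proof: invoke Theorem \ref{Th:blmm} to get a Reich sequence for the constant-modulus unique extremal, feed it into Lemma \ref{Th:decr} after identifying $\delta[\vp_n]/k$ with $\iint[|\vp_n|-Re(\vp_n\,sgn\,\eta)]$, and finish with Fatou's lemma and the positivity of $\liminf_n|\vp_n|$. Your explicit handling of the passage from $\eta$ to $\mu=\kappa\eta$ on $\Lambda$ and of the set $\md\setminus\Lambda$ only makes precise what the paper leaves implicit.
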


\begin{proof}To avoid triviality, assume $k>0$. For any given $g\in [f]$, let $G=g^{-1}$. Let $\nu$ and $\wt\nu$ denote the Beltrami differentials of $g$ and $G$ respectively. To prove that $\wt\mu$ is non-decreasable in its \T class $[\wt\mu]$, it is sufficient to show that if $|\wt \nu(w)|\leq |\wt \mu(w)|$ holds for almost all $w\in \md$, then $\wt \mu=\wt\nu$. Use the denotations $\alpha=\wt \mu\circ f, \;\beta=\wt\nu\circ f$.

On the one hand, since $\mu$ is uniquely extremal and has constant absolute value on $\md$, by Theorem \ref{Th:blmm} $\mu$ has a Reich sequence on $\md$, that is, there is a sequence  $\{\vp_n\}\subset \qmd$ such that
\\
(a) $\delta[\vp_n]=k\|\vp_n\|-Re\iint_\md \mu(z)\vp_n(z)\,dxdy\to 0$, and\\
(b) $\liminf_{n\to \infty}|\vp_n(z)|>0$ for almost all $z\in \md$.

On the other hand, by Lemma \ref{Th:decr} and Reich's condition (a), when $|\wt \nu(w)|\leq |\wt \mu(w)|$ holds for almost all $w\in \md$, we have
\begin{equation}\label{Eq:prdecr1}
\begin{split}
&\iint_\Lambda|\alpha-\beta|^2|\vp_n | \;dxdy
\leq C\iint_\Lambda [|\vp_n|-Re(\vp_n sgn \mu)]\;dxdy\\
&=\frac{C}{k}\iint_\Lambda[k|\vp_n|-Re \mu(z)\vp_n(z)\,dxdy]\to 0,\;n\to \infty,
\end{split}
\end{equation}
where $\Lambda=\{z\in \md:\; \mu(z)\neq 0\}$ and $C$ is a constant depending only on $k$. It follows from Reich's condition (b) and Fatou's Lemma that $\alpha=\beta$ a.e. on $\Lambda$. Hence,
$\wt \mu(w)=\wt \nu(w)$ for almost all $w\in \md$.

\end{proof}

The second  theorem deforms a unique extremal Beltrami differential into a non-landslide and non-decreasable Beltrami differential which  keeps the extremality.

\begin{theorem}\label{Th:nondecr1}
 Suppose  $\eta\in M(\md)$ is uniquely extremal such that  $[\eta]$ is a not a Strebel point.  Assume in addition that $\eta$ has a constant modulus.  Let $k=\lnorm{\eta}>0$.  Suppose $E\subset\md$ is a compact subset with positive measure and empty interior. Let $\kappa$ be a non-negative measurable function on $\md$ such that $\kappa(z)=1$ for $z\in \md\backslash E$ and $\esssup_{z\in E} |\kappa|<1$.
Put
\begin{align}\label{Eq:form}
\mu(z)=\kappa(z)\eta(z), \;z\in \md,
\end{align}
Let $f$ be the \qc mapping from $\md$ onto itself with the Beltrami differential $\mu$. Then  the Beltrami differential $\wt \mu$ of $F=f^{-1}$  is  extremal, non-landslide and  non-decreasable  in its \T class $[\wt\mu]$.\end{theorem}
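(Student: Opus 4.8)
The plan is to establish the three properties of $\wt\mu$ one at a time, using that $\mu=\kappa\eta$ coincides with $\eta$ off the topologically thin compact set $E$. First I would record the basic structure. Since $|\eta|\equiv k$, $\kappa\equiv 1$ on $\md\backslash E$ and $\esssup_{E}|\kappa|<1$, we have $|\mu(z)|=k$ on $\md\backslash E$ and $|\mu(z)|<k$ on $E$; hence $\lnorm{\mu}=k$ (attained on the open dense set $\md\backslash E$) and $\mu\in M(\md)$. Because $\alpha=\wt\mu\circ f$ obeys $|\alpha(z)|=|\mu(z)|$, the map $f$ carries the modulus of $\mu$ onto that of $\wt\mu$, so $\lnorm{\wt\mu}=k$ and the deficient set $\{w:|\wt\mu(w)|<k\}$ equals $f(E)$ up to a null set.

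For extremality I would invoke the non-Strebel hypothesis. As $[\eta]$ is not a Strebel point and $\eta$ is extremal, $\eta$ carries a degenerating Hamilton sequence $\{\phi_n\}\subset Q^1(\md)$: $Re\iint_\md\eta\phi_n\,dxdy\to k$ with $\phi_n\to 0$ locally uniformly. Since $E$ is compact, $\sup_E|\phi_n|\to 0$, so $\iint_E|\phi_n|\,dxdy\to 0$. From $Re\iint_\md\mu\phi_n=Re\iint_\md\eta\phi_n-Re\iint_E(1-\kappa)\eta\phi_n$ and $|Re\iint_E(1-\kappa)\eta\phi_n|\leq k\iint_E|\phi_n|\to 0$, I obtain $Re\iint_\md\mu\phi_n\to k=\lnorm{\mu}$, so $\{\phi_n\}$ is a Hamilton sequence for $\mu$ and $\mu$ is extremal in $[\mu]$. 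Extremality then transfers to the inverse: for any $G\sim F$ the inverse $g=G^{-1}$ satisfies $g\sim f$, so $K(G)=K(g)\geq K(f)=K(F)$ by extremality of $f$, whence $\wt\mu$ is extremal in $[\wt\mu]$.

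Non-landslideness I would deduce from the thinness of $f(E)$. As $f$ is a self-homeomorphism of $\md$, $f(E)$ is compact with empty interior, so $\md\backslash f(E)$ is open and dense. If $\wt\mu$ were landslide there would be a non-empty open set $\wt E$ with $\esssup_{\wt E}|\wt\mu|<k$; since $|\wt\mu|=k$ off $f(E)$, this would force $\wt E\subset f(E)$ up to a null set. But $\wt E\cap(\md\backslash f(E))$ is a non-empty open set, hence of positive measure, a contradiction. Therefore $\wt\mu$ is non-landslide.

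Non-decreasability I would obtain by the scheme of Theorem \ref{Th:nondecr}, which now applies since $\lnorm{\mu}=k<1$ and hence Lemma \ref{Th:decr} is available. Because $\eta$ is uniquely extremal of constant modulus, Theorem \ref{Th:blmm} gives a Reich sequence $\{\vp_n\}\subset\qmd$ for $\eta$: $k\|\vp_n\|-Re\iint_\md\eta\vp_n\to 0$ and $\liminf_n|\vp_n(z)|>0$ a.e. Given $g\in[f]$ with inverse $G$, Beltrami differential $\wt\nu$, and $|\wt\nu|\leq|\wt\mu|$ a.e., Lemma \ref{Th:decr} bounds $\iint_\Lambda|\alpha-\beta|^2|\vp_n|$ above by $C\iint_\Lambda[|\vp_n|-Re(\vp_n\,sgn\,\mu)]$. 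On $\Lambda$ we have $sgn\,\mu=sgn\,\eta=\eta/k$, so this right-hand integrand is $\frac{1}{k}(k|\vp_n|-Re(\eta\vp_n))$, which is pointwise non-negative on all of $\md$ because $|\eta|=k$; its integral over $\md$ tends to $0$ by Reich's condition, so its integral over $\Lambda$ does too. Thus $\iint_\Lambda|\alpha-\beta|^2|\vp_n|\to 0$, and Fatou's Lemma with $\liminf|\vp_n|>0$ yields $\alpha=\beta$ a.e. on $\Lambda$, while $\alpha=\beta=0$ off $\Lambda$ exactly as in Lemma \ref{Th:decr}; hence $\wt\mu=\wt\nu$. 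The main obstacle is the extremality step, since decreasing $\mu$ on $E$ could in principle destroy extremality; what rescues it is precisely the non-Strebel hypothesis, providing a degenerating Hamilton sequence that concentrates away from the compact set $E$, and the delicate point is that this sequence is different from the non-degenerating Reich sequence required for non-decreasability.
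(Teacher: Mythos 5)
Your proposal is correct and follows essentially the same route as the paper: a degenerating Hamilton sequence for $\eta$ (available since $[\eta]$ is not a Strebel point) passes to $\mu$ because it vanishes uniformly on the compact set $E$, the empty interior of $f(E)$ gives non-landslideness, and non-decreasability comes from Lemma \ref{Th:decr} combined with a Reich sequence for $\eta$ and Fatou's Lemma, exactly as in Theorem \ref{Th:nondecr} (which the paper simply cites at this point while you inline its argument). Your observation that on $\Lambda$ one has $sgn\,\mu=sgn\,\eta$, so that the right-hand integrand is the pointwise non-negative quantity $\tfrac{1}{k}(k|\vp_n|-Re(\eta\vp_n))$ controlled by the Reich sequence for $\eta$, is a welcome clarification of a point the paper glosses over.
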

\begin{proof} At first,  by Theorem \ref{Th:nondecr}, $\wt \mu$ is non-decreasable in $[\wt\mu]$.
 Since $\eta$ is a non-Strebel extremal,  $\eta$    has a degenerating Hamilton sequence $\{\phi_n\}\subset Q^1(\md)$.   Noting  $\mu(z)= \eta(z)$ for  $z\in \md\backslash E$, it is easy to see that  $\{\phi_n\}$ is also a Hamilton sequence for $ \mu$ and hence $\mu$ is extremal. Furthermore, $\wt \mu$ is extremal.
It is easy to verify that $|\wt\mu(w)|=k$ for $w\in \md\backslash f(E)$ and $\esssup_{w\in f(E)}=\esssup_{z\in E} |\kappa|<1$.
 Because $f(E)$ has empty interior, by definition it is obvious that $\mu$ is non-landslide.

\end{proof}

The following two theorems are  the counterparts of Theorems \ref{Th:nondecr} and \ref{Th:nondecr1} in the infinitesimal case, respectively.
 \begin{theorem}\label{Th:infnondecr}
 Suppose $\eta\in Bel(\md)$ is infinitesimally uniquely extremal in $[\eta]$ and  has a constant modulus.  Let $k=\lnorm{\eta}$. Put
\begin{align*}
\mu(z)=\kappa(z)\eta(z), \;z\in \md,
\end{align*}
where $\kappa(z)$ is a  non-negative measurable function on $\md$ with $\lnorm{\kappa}\leq k$. Then $\mu$ is infinitesimally non-decreasable in its infinitesimal class $\emb$.
\end{theorem}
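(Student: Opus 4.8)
The plan is to transcribe the proof of Theorem~\ref{Th:nondecr} into the infinitesimal setting, using Lemma~\ref{Th:infdecr} in place of Lemma~\ref{Th:decr} and working directly with $\mu,\nu$ rather than with the Beltrami differentials of inverse maps.

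First I would produce a Reich sequence for $\eta$. If $\lnorm{\eta}=k\geq1$, replace $\eta$ by $\eta/(2k)$; since infinitesimal unique extremality and constant modulus are both preserved under positive scaling, there is no loss in assuming $\eta\in M(\md)$. Then by the equivalence (ii)$\Leftrightarrow$(iii) of Theorem~\ref{Th:blmm}, $\eta$ satisfies Reich's condition, so after rescaling back there is a sequence $\{\vp_n\}\subset\qmd$ with $\delta[\vp_n]=k\|\vp_n\|-Re\iint_\md\eta\vp_n\,dxdy\to0$ and $\liminf_{n\to\infty}|\vp_n(z)|>0$ for a.e.\ $z\in\md$.

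The crux is to transfer this sequence from $\eta$ to $\mu$, and this is where both hypotheses on $\kappa$ and $\eta$ enter. Because $|\eta|\equiv k$ we may write $\eta=k\,sgn\,\eta$, so Reich's condition (a) collapses to
\[
\iint_\md\bigl[|\vp_n|-Re(\vp_n\,sgn\,\eta)\bigr]\,dxdy\to0 .
\]
The integrand is pointwise non-negative, hence its integral over the subset $\Lambda=\{z\in\md:\mu(z)\neq0\}$ also tends to $0$; and since $\kappa\geq0$ while $\eta$ never vanishes, $sgn\,\mu=sgn\,\eta$ on $\Lambda$, giving
\[
\iint_\Lambda\bigl[|\vp_n|-Re(\vp_n\,sgn\,\mu)\bigr]\,dxdy\to0 .
\]

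Finally I would run the decreasability argument exactly as in Theorem~\ref{Th:nondecr}. Given $\nu\in\emb$ with $|\nu(z)|\leq|\mu(z)|$ a.e., Lemma~\ref{Th:infdecr} and the last display yield
\[
\iint_\Lambda|\mu-\nu|^2|\vp_n|\,dxdy\leq C\iint_\Lambda\bigl[|\vp_n|-Re(\vp_n\,sgn\,\mu)\bigr]\,dxdy\to0 ,
\]
where $C$ is the constant of Lemma~\ref{Th:infdecr}, depending only on $\lnorm{\mu}$. Invoking Fatou's Lemma together with the lower bound $\liminf_{n\to\infty}|\vp_n|>0$ forces $\mu=\nu$ a.e.\ on $\Lambda$, while on $\md\setminus\Lambda$ the inequality $|\nu|\leq|\mu|=0$ gives $\nu=0=\mu$; thus $\mu=\nu$ throughout $\md$, which is infinitesimal non-decreasability. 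I expect the only delicate point to be the transfer step: one needs the non-negativity of $\kappa$ to identify the signs on $\Lambda$ and the constant modulus of $\eta$ to turn the Reich functional into the non-negative quantity $|\vp_n|-Re(\vp_n\,sgn\,\mu)$ required by Lemma~\ref{Th:infdecr}; everything after that is the routine infinitesimal analogue of Theorem~\ref{Th:nondecr}.
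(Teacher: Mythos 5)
Your proposal is correct and follows essentially the same route as the paper: obtain a Reich sequence via the Characterization Theorem, feed it into Lemma \ref{Th:infdecr}, and conclude with Fatou's Lemma and condition (b). If anything, your treatment of the transfer step is more careful than the paper's, which writes the Reich functional with $\mu$ in place of $\eta$ (literally valid only when $\kappa\equiv1$ on $\Lambda$), whereas you correctly pass from $\eta$ to $sgn\,\mu$ on $\Lambda$ using the constant modulus of $\eta$, the non-negativity of the integrand, and $sgn\,\mu=sgn\,\eta$ there.
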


\begin{proof} When $k=0$, the proof is trivial. Now assume $k>0$. To prove that $\mu$ is non-decreasable in its infinitesimal class $\emb$, it suffices to show that, for any given $\nu\in \emb$, if  $|\nu(z)|\leq |\mu(z)|$ holds for almost all $z\in \md$, then  $\mu=\nu$.

By the Characterization Theorem (see Theorem 1 in \cite{BLMM}),  since $\mu$ is infinitesimally uniquely extremal, it
has a Reich sequence on $\md$, that is, there is a sequence  $\{\vp_n\}\subset \qmd$ such that
\\
(a) $\delta[\vp_n]=k\|\vp_n\|-Re\iint_\md \mu(z)\vp_n(z)\,dxdy\to 0$, and\\
(b) $\liminf_{n\to \infty}|\vp_n(z)|>0$ for almost all $z\in \md$.

On the other hand, by Lemma \ref{Th:infdecr} and Reich's condition (a), when $|\nu(z)|\leq |\mu(z)|$ holds for almost all $z\in \md$, we have
\begin{equation}\label{Eq:prinfdecr1}
\begin{split}
&\iint_\Lambda|\mu-\nu|^2|\vp_n | \;dxdy
\leq C\iint_\Lambda [|\vp_n|-Re(\vp_n sgn \mu)]\;dxdy\\
&=\frac{C}{k}\iint_\Lambda[k|\vp_n|-Re \mu(z)\vp_n(z)\,dxdy]\to 0,\;n\to \infty,
\end{split}
\end{equation}
where $\Lambda=\{z\in \md:\; \mu(z)\neq 0\}$ and $C$ is a constant depending only on $k$. It follows from Reich's condition (b) and Fatou's Lemma tha $\mu=\nu$ a.e. on $\Lambda$.  Hence,
$\mu(z)=\nu(z)$ for almost all $z\in \md$.

\end{proof}
\begin{theorem}\label{Th:infnondecr1}
 Suppose  $\eta\in Bel(\md)$ is uniquely extremal such that  $[\eta]_B$ is a not an infinitesimal Strebel point.  Assume in addition that $\eta$ has a constant modulus.  Let $k=\lnorm{\eta}>0$.  Suppose $E\subset\md$ is a compact subset with positive measure and empty interior. Let $\kappa$ be a non-negative measurable function on $\md$ such that $\kappa(z)=1$ for $z\in \backslash E$ and $\esssup_{z\in E} |\kappa|<1$.
Put
\begin{align*}
\mu(z)=\kappa(z)\eta(z), \;z\in \md,
\end{align*}
 Then  the Beltrami differential $ \mu$  is  extremal, non-landslide and  infinitesimally non-decreasable  in its infinitesimal class $\emb$.
 \end{theorem}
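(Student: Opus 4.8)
The plan is to transcribe the argument of Theorem \ref{Th:nondecr1} into the infinitesimal setting, with the simplification that here one may work directly with $\mu$ on $\md$ rather than passing to the inverse map. There are three assertions to verify — that $\mu$ is infinitesimally non-decreasable, extremal, and non-landslide — and the first is essentially free: since $\eta$ is infinitesimally uniquely extremal with constant modulus and $\kappa\geq 0$ satisfies the hypotheses of Theorem \ref{Th:infnondecr}, that theorem applies verbatim and already yields that $\mu=\kappa\eta$ is infinitesimally non-decreasable in $\emb$. So the remaining work concerns extremality and the non-landslide property.

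For extremality I would first record that $\lnorm{\mu}=k$: on $\md\setminus E$ one has $\kappa\equiv 1$, hence $|\mu|=|\eta|=k$, while on $E$ one has $|\mu|=\kappa k\leq k\,\esssup_{z\in E}|\kappa|<k$; since $\md\setminus E$ has positive measure the essential supremum is exactly $k$. Because $[\eta]_B$ is not an infinitesimal Strebel point and $\eta$ is infinitesimally extremal, $\eta$ admits a degenerating Hamilton sequence $\{\phi_n\}\subset Q^1(\md)$ (the infinitesimal analogue of the non-Strebel characterization recalled in the Introduction). I would then show the same sequence is a Hamilton sequence for $\mu$. Splitting
\begin{equation*}
Re\iint_\md \mu\phi_n\,dxdy = Re\iint_{\md\setminus E}\eta\phi_n\,dxdy + Re\iint_E \kappa\eta\phi_n\,dxdy,
\end{equation*}
the first term equals $Re\iint_\md\eta\phi_n\,dxdy - Re\iint_E\eta\phi_n\,dxdy$, and every integral over $E$ is dominated by $k\iint_E|\phi_n|\,dxdy\leq k\,|E|\sup_E|\phi_n|\to 0$, because $\{\phi_n\}$ degenerates (it tends to $0$ uniformly on the compact set $E$) and $E$ has finite measure. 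Hence $Re\iint_\md \mu\phi_n\,dxdy\to Re\iint_\md\eta\phi_n\,dxdy\to k$, so $\|\emb\|=k=\lnorm{\mu}$ and $\mu$ is infinitesimally extremal.

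For the non-landslide property I would use that $E$ is compact with empty interior, so $\md\setminus E$ is open and dense. Given any non-empty open $G\subset\md$, the set $G\setminus E=G\cap(\md\setminus E)$ is open and, since $E$ contains no open set, non-empty; on it $\kappa\equiv 1$ and thus $|\mu|=k$. Therefore $\esssup_{z\in G}|\mu|=k=\lnorm{\mu}$ for every non-empty open $G$, which is precisely the non-landslide condition.

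The only step demanding genuine care is the transfer of the degenerating Hamilton sequence from $\eta$ to $\mu$: everything hinges on the compactness of $E$ together with the degeneration of $\{\phi_n\}$, which jointly force the contribution over $E$ to vanish in the limit, leaving the full mass $k$ carried by the region $\md\setminus E$ where $\mu$ and $\eta$ coincide. The non-decreasability and non-landslide parts are then formal consequences of Theorem \ref{Th:infnondecr} and of $E$ having empty interior, respectively.
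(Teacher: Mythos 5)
Your proposal is correct and follows essentially the same route as the paper: infinitesimal non-decreasability is quoted from Theorem \ref{Th:infnondecr}, extremality is obtained by transferring a degenerating Hamilton sequence of $\eta$ to $\mu$ using the compactness of $E$ and the agreement $\mu=\eta$ off $E$, and the non-landslide property follows from $E$ having empty interior. The paper's own proof is just a terser version of this, leaving the Hamilton-sequence transfer as ``easy to see''; your write-up supplies exactly the details it omits.
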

\begin{proof} At first,  by Theorem \ref{Th:infnondecr}, $ \mu$ is non-decreasable in $\emb$.
 Since $\eta$ is an infinitesimal non-Strebel extremal and   $\mu(z)= \eta(z)$ for  $z\in \md\backslash E$, it is easy to see that  $\mu$ is extremal.
Notice that $|\mu(z)|=k$ for $z\in \md\backslash E$ and $\esssup_{z\in E} |\kappa|<1$.
 Because $E$ has empty interior, by definition it is obvious that $\mu$ is infinitesimally non-landslide.

\end{proof}

\section{\!\!\!\!\!{. Construction Theorem }}\label{S:construction}

Using Theorem \ref{Th:nondecr1}, we can get extremal Beltrami differential $\wt\mu$ that is both non-landslide and non-decreasable. But it is not sure whether  $\wt\mu$ is not uniquely extremal. To ensure that $\wt\mu$ is a non-unique extremal in addition, we need to choose $E$ in Theorem \ref{Th:nondecr1} carefully.
 A 2-dimensional Cantor set $\mathscr{C}$  in $\md$ with non-zero measure is constructed for the requirement in general case.

We construct a so-called $\frac{1}{5}$-Cantor set in the closed, bounded interval $I=[0,1]$ at first. The first step in the construction is to subdivide $I$ into five intervals of equal length $\frac{1}{5}$ and remove the interior of the middle interval, that is, we remove the interval ($\frac{2}{5}$,$\frac{3}{5}$) from the interval $[0,1]$ to the obtain the closed set $C_1$, which is the union of two disjoint closed intervals, each of length $\frac{2}{5}$:
\[C_1=[0,\frac{2}{5}]\cup[\frac{3}{5},1].\]
We now repeat this ``open middle $\frac{1}{5}-$ removal" on each of the two intervals in $C_1$ to obtain a closed set $C_2$, which is the union of $2^2$ closed intervals, each of length $\frac{2^2}{5^2}$:
\[C_2=[0,\frac{4}{5^2}]\cup[\frac{6}{5^2},\frac{2}{5}]\cup[\frac{3}{5},\frac{19}{5^2}]\cup[\frac{21}{5^2},1].\]
We now repeat this ``open middle $\frac{1}{5}-$ removal" on each of the two intervals in $C_2$ to obtain a closed set $C_3$, which is the union of $2^3$ closed intervals, each of length $\frac{2^3}{5^3}$. We continue the removal operation countably many times to obtain the countable collection of sets $\{C_k\}_{k=1}^\infty$. We define the $\frac{1}{5}$-Cantor set $\mcc$ by
\[\mcc=\bigcap_{k=1}^\infty C_k.\]
The collection $\{C_k\}_{k=1}^\infty$ possesses the following properties:\\
(i) $\{C_k\}_{k=1}^\infty$ is a descending sequence of closed sets;\\
(ii) For each $k$, $C_k$ is the disjoint union of $2^k$ closed intervals, each of length $\frac{2^k}{5^k}$.

It is easy to compute the measure of $\mcc$:
\[ meas(\mcc)=1-\sum_{k=1}^\infty \frac{2^{k-1}}{5^k}=\frac{2}{3}.\]
Given $\lambda\in (0,1)$,  let $\mcc_\lambda=\lambda \mcc=:\{\lambda x, \;x\in \mcc\}$ and $\msc=\{re^{i\theta}:\; r\in \mcc_\lambda,\; \theta\in [0,2\pi)\}$.
Then $\msc$ is 2-dimensional Cantor set in $\md$ with empty interior and  $meas(\msc)>0$. \\

\noindent\textbf{Construction Theorem I.} (1) Replace $E$   by $\msc$ and keep other assumptions in Theorem \ref{Th:nondecr1}.  Then $\wt\mu$ is a non-unique extremal that is both non-landslide and non-decreasable. \\
(2) Replace $E$   by $\msc$ and keep other assumptions in Theorem \ref{Th:infnondecr1}.  Then $\mu$ is a non-unique extremal that is both non-landslide and infinitesimally non-decreasable.

\begin{proof}
By the analysis above, we only need to show that  $\mu$ is not uniquely extremal in both cases. In virtue of Theorem \ref{Th:blmm}, it is sufficient and  more convenient to prove that $\mu$ is not infinitesimally uniquely extremal.

Recall that $\mathscr{N}$ is the collection of Beltrami differentials infinitesimally equivalent to 0.  Let $\zeta\in Bel(\md)$  and define the  support set of $\zeta$ by $supp(\zeta):=\{z\in\md:\;\zeta(z)\neq 0\}$. Set
\[Z[\msc]:=\{\zeta\in \mathscr{N}:\; supp(\zeta)\subset \msc\}.\]
It is obvious that  $0\in Z[\msc]$. If  $Z[\msc]\backslash \{0\}\neq \emptyset$, then for any $\gamma\in Z[\msc]\backslash \{0\}$,    $\mu+t\gamma\in \emb$ for any $t\in \mc$. Observe the condition $\esssup_{z\in\msc}|\mu|<k$. Then, $\mu+t\gamma$ is extremal in $\emb$  when $|t|$ is sufficiently small which  implies that $\mu$ is a non-unique extremal.
It remains to show that $Z[\msc]\backslash \{0\}\neq \emptyset$. Fix a positive integer number $m$ and let
\begin{equation*}\gamma(z)=\begin{cases} z^m,\;&z\in \msc,\\
0, \;&z\in \md\backslash \msc.
\end{cases}\end{equation*}

\textit{Claim.} $\gamma\in Z[\msc]\backslash \{0\}$.

By the definition of $\mathscr{N}$, we need to show  that
\begin{equation*}
\iint_\md \gamma(z)\vp(z)\;dxdy=0,\text{ for any } \vp\in \qmd.\end{equation*}
Note that $\{1,z,z^2,\cdots, z^n,\cdots\}$ is a base of the Banach space $\qmd$. It suffices to prove
\begin{align}\label{Eq:zzn}
\iint_\md \gamma(z) z^n\;dxdy=0,\; \text{ for any } n\in \mn.
\end{align}
By the construction of $\mcc$, we see that the open set $A=[0,1]\backslash \mcc$ is the union of countably many disjoint open intervals. Let $\mathscr{A}=\lambda A:=\{\lambda z:\;z\in A\}$. Then $[0,\lambda]=\mathscr{A}\cup \mcc_\lambda$. Set $\mathscr{D}=\{re^{i\theta}:\; r\in \mathscr{A},\; \theta\in [0,2\pi)\}$. It is clear that
\[\mathscr{D}\cup \msc=\{re^{i\theta}:\; r\in [0,\lambda],\; \theta\in [0,2\pi)\}=\{z:\;|z|\leq \lambda\}.\]
Define
\begin{equation*}\wt\gamma(z):=\begin{cases}z^m, \;&z\in \mathscr{D}, \\
0,\;&z\in \md\backslash \mathscr{D},
\end{cases}\end{equation*}
and
\begin{align}\label{Eq:ga1}\Gamma(z):=\gamma(z)+\wt\gamma(z)=\begin{cases} z^m, \;&|z|\leq \lambda,\\
0,\;&\lambda<|z|<1.\end{cases}
\end{align}
A simple computation shows that
\begin{equation}\label{Eq:zzn1}
\begin{split}
&\iint_\md \Gamma(z) z^n\;dxdy=\iint_{|z|\leq \lambda} \Gamma(z) z^n\;dxdy=\iint_{|z|\leq \lambda} z^{m+n}\;dxdy\\
&=\int_0^\lambda r\;dr\int_{0}^{2\pi}e^{i(m+n)\theta}\;d\theta=0,\; \text{ for any } n\in \mn.
\end{split}\end{equation}
Observe that $\mathscr{D}$ is the union of  countably many disjoint ring domains each of which can be written in the form $R=\{re^{i\theta}:\; r\in (x,x'),\; \theta\in [0,2\pi)\}$, $x,x'\in (0,\lambda)$.
A similar computation gives
\begin{align}\label{Eq:zzn2}
\iint_R \Gamma(z) z^n\;dxdy=\iint_{R} z^{m+n}\;dxdy=0,\; \text{ for any } n\in \mn.
\end{align}
Hence, we get
\begin{align}\label{Eq:zzn3}
\iint_\mathscr{D} \Gamma(z) z^n\;dxdy=0,\; \text{ for any } n\in \mn.
\end{align}
Combining (\ref{Eq:ga1}), (\ref{Eq:zzn1}) and (\ref{Eq:zzn3}), we obtain
\begin{align}\label{Eq:zzn4}
\iint_\msc \Gamma(z) z^n\;dxdy=0,\; \text{ for any } n\in \mn,
\end{align}
which is equivalent to (\ref{Eq:zzn}). The completes the proof of Construction Theorem.

\end{proof}

Let $\vp$ be a  holomorphic function on $\md$ and  $\eta=k\frac{\ov\vp}{|\vp|}$. In one case, by the result in \cite{Lak}, for $\vp$ in a dense subset of $\qmd$, the corresponding \T differential  $\eta$ is a non-Strebel extremal (necessarily uniquely extremal).  In other case, there are a lot of holomorphic functions in $\md$  with $\iint_\md|\vp|\;dxdy=\infty$  such that $\mu$ is uniquely extremal, of course a non-Strebel extremal (see \cite{HR, Rei, Yao1}), for example, let $\vp=\frac{1}{(1-z)^2}$.

\begin{lemma}\label{Th:mat}
Let $E$ be a compact subset of $\md$, $G=\md\backslash E$ and $\vp$ a holomorphic function on $G$. Suppose that \\
\indent (a) $\mu$ is uniquely extremal on $\md$,\\
\indent (b) $\mu=\kappa(z)\frac{\ov{\vp(z)}}{|\vp(z)|}$ on $G$,\\
where $\kappa$ is non-negative measurable function on $G$. Then \\
\indent (i) $\vp$ has a holomorphic extension $\wt\vp$ from
$G$ to $\md$,
\\
\indent (ii) $\mu=k|\wt\vp|/\wt\vp$  a.e. in
$\md$ ($k=\lnorm{\mu}$).

\end{lemma}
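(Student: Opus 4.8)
The plan is to reduce both conclusions to the single assertion that $\mu$ is a Teichm\"uller differential on the whole disk, i.e. that there is a function $\wt\vp$ holomorphic on $\md$ with $\mu=k|\wt\vp|/\wt\vp$ a.e. in $\md$. Granting this, (ii) is immediate, and (i) follows at once: on each component of the open set $G$ hypothesis (b) gives $\mu=\kappa\,\ov\vp/|\vp|$ while the global form gives $\mu=k\,\ov{\wt\vp}/|\wt\vp|$, so comparing moduli forces $\kappa\equiv k$ there, and comparing arguments forces $\vp/|\vp|=\wt\vp/|\wt\vp|$; hence $\vp/\wt\vp$ is a positive-real-valued holomorphic function, thus a positive constant on each component, and after the obvious normalization $\wt\vp$ is exactly the holomorphic extension of $\vp$. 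So the entire content of the lemma is to upgrade the \emph{local} Teichm\"uller structure supplied by (b) on $G$ to a \emph{global} one on $\md$.

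First I would exploit the three standing properties of a unique extremal recorded in the introduction, namely that $\mu$ is non-landslide, locally extremal and non-decreasable. Local extremality lets me test on subdomains $D\Subset G$, where $\mu|_D=\kappa\,\ov\vp/|\vp|$ already has the Teichm\"uller \emph{direction} of the holomorphic $\vp$ and is extremal in $T(D)$; provided $D$ is chosen so that a frame mapping is available, Strebel's frame-mapping criterion then identifies $\mu|_D$ with a genuine Teichm\"uller map, so $\kappa$ is constant on $D$ and $\vp\in Q^1(D)$. Since $D$ is open, the non-landslide property forces $\esssup_D|\mu|=k$, so that constant is $k$. Letting $D$ exhaust $G$ yields $\kappa\equiv k$ and $|\mu|=k$ a.e. on $G$, with $\vp$ a bona fide holomorphic quadratic differential there; this is the local form of the desired conclusion.

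The heart of the matter --- and the step I expect to be the main obstacle --- is to carry this across the compact set $E$, which in the intended application has positive measure. Across such a set a holomorphic function need not extend, so the extension cannot be formal and must be forced by the \emph{global} unique extremality rather than by any local regularity on $G$ alone. The strategy I would pursue is a rigidity argument by contradiction: if $\vp$ had a genuine singularity on $E$, then, using that $|\mu|\le k$ on $E$ leaves slack there, one could modify $\mu$ on a neighborhood of $E$ to produce a second Beltrami differential in the same \T class $[\mu]$ that is still extremal, contradicting the uniqueness of $\mu$ as the extremal. The only way to preclude every such competitor is that $\mu$ already be $k|\wt\vp|/\wt\vp$ throughout $\md$ for a single $\wt\vp$ holomorphic across $E$; this simultaneously gives $|\mu|=k$ a.e. on $E$, the Teichm\"uller direction there, and the sought extension. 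Making the competitor construction precise on a possibly ``fat'' Cantor-type $E$ --- controlling the perturbation and verifying that extremality is preserved --- is the delicate technical point, and is exactly where the unique-extremality hypothesis (a) does its work.
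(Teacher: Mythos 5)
The paper does not prove this lemma from scratch: its entire proof is a citation, observing that the statement is the Second Removable Singularity Theorem (Theorem G4 of Mateljevi\'c's paper, or Theorem 2.3 on p.~113 of Reich's handbook article). Your proposal attempts an actual proof, but it has a genuine gap precisely at the point you yourself identify as ``the heart of the matter'': the holomorphic extension of $\vp$ across the compact set $E$ (which in the intended application has positive measure) is never carried out. What you offer there is a contradiction argument whose contradiction is not derived --- you assert that if $\vp$ had a genuine singularity on $E$ one ``could modify $\mu$ on a neighborhood of $E$'' to produce a second extremal in $[\mu]$, but you give no construction, and it is far from clear that one exists: altering $\mu$ on a set does not in general keep it in the same \T class, and producing a nonzero competitor supported near $E$ that preserves both the class and extremality is exactly the kind of thing that can fail (when it succeeds, it proves \emph{non}-unique extremality, as in Construction Theorem I). The actual mechanism behind the removable singularity theorem is different: by the Characterization Theorem one takes a Reich sequence $\{\vp_n\}\subset\qmd$ with $\delta[\vp_n]\to0$ and $\liminf_n|\vp_n|>0$ a.e.; the vanishing of $\delta[\vp_n]$ forces the arguments of the $\vp_n$ to align with $\overline{\mathrm{sgn}\,\mu}$, a normal-families argument extracts a locally uniform limit holomorphic on all of $\md$, and hypothesis (b) pins that limit to a positive multiple of $\vp$ on $G$. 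None of this machinery appears in your sketch.

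Your preliminary step on $G$ is also not secure. You invoke Strebel's frame-mapping criterion on subdomains $D\Subset G$, but for $\mu|_D$ the boundary dilatation can equal the extremal dilatation (for instance if $|\mu|=k$ a.e.\ near $\partial D$), so the criterion need not apply; and even where it does, local extremality only says $\mu|_D$ is \emph{an} extremal in $T(D)$, which identifies it with the \T representative only if that point of $T(D)$ is a Strebel point. You also lean on the introduction's blanket assertion that unique extremals are non-landslide to force $\kappa\equiv k$, rather than deriving $|\mu|=k$ a.e.\ from the argument itself. In short, the reduction in your first paragraph is fine, but the lemma is not proved: the correct route is to quote, or reprove via Reich sequences, the Second Removable Singularity Theorem, which is exactly what the paper does.
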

\begin{proof} It is a simple
corollary of  Theorem G4 (the Second Removable Singularity
Theorem) of \cite{Mat}  or Theorem 2.3 on page 113  in \cite{Re2}.
\end{proof}
In the following  Construction Theorem II, we only assume that $E$ is a compact subset of $\md$ with positive measure, provided that   $\eta$ is a uniquely extremal  \T differential  representing a non-Strebel point.

\noindent\textbf{Construction Theorem II.}  Assume that $\vp$ is a holomorphic function on $\md$ such that  $\eta=k\frac{\ov\vp}{|\vp|}$ is uniquely extremal and represents a non-Strebel point. \\
(1) Keep other assumptions in Theorem \ref{Th:nondecr1}.  Then $\wt\mu$ is a non-unique extremal that is both non-landslide and non-decreasable. \\
(2) Keep other assumptions in Theorem \ref{Th:infnondecr1}.  Then $\mu$ is a non-unique extremal that is both non-landslide and infinitesimally non-decreasable.
\begin{proof}
It is sufficient to prove that $\mu$ is not uniquely extremal on $\md$. Actually, if $\mu$ is uniquely extremal, then by Lemma \ref{Th:mat}, $\mu=k\frac{\ov\vp}{|\vp|}=\eta$ on $\md$, which contradicts the assumption.
\end{proof}
The non-unique extremal $\mu$ given by Construction Theorem II is not locally extremal since otherwise by Theorem G3 (the First Removable Singularity
Theorem) of \cite{Mat}, $\mu$ is identical to $\eta$ on $\md$. However, we do not know whether the non-unique extremal $\mu$ given by Construction Theorem I is possibly locally extremal. If yes, then Problem $\mathscr{A}$ is solved.

\renewcommand\refname{\centerline{\Large{R}\normalsize{EFERENCES}}}
\medskip

\end{document}